\documentclass[preprint,12pt]{elsarticle}



\usepackage{amsmath,amssymb,bm}
\usepackage{amsthm}
\usepackage{xcolor}
\usepackage{graphicx}
\usepackage{hyperref}
\usepackage{float}
\usepackage{lineno}
\usepackage{enumitem}

\journal{Nuclear Physics B}
\newtheorem{theorem}{Theorem}[section]

\newtheorem{lemma}[theorem]{Lemma}
\newtheorem{proposition}{Proposition}
\newtheorem{definition}{Definition}

\newtheorem{remark}[theorem]{Remark}
\newtheorem{claim}{Claim}

\begin{document}

\begin{frontmatter}



\title{Self-Expanding Solutions to the Mean Curvature Flow for Multiphase Surfaces with Regular Junctions}


\author{Wei-Hung Liao} 
\affiliation{organization={Shanghai Institute for Mathematics and Interdisciplinary Sciences},
            city={Shanghai},
            postcode={200433}, 
            country={China}}

\begin{abstract}
We consider a multiphase surface $\mathcal{C}_0$ in $\mathbb{R}^3$ consisting of a finite number of surfaces passing through the origin , where all 1-dimensional junctions are regular triple junctions in which three planes meet at the same angle and each surface scales down homothetically to a limit curve of finite length. We prove the existence of self-similar expanding solutions of the mean curvature flow on the multiphase surface initially given by $\mathcal{C}_0$. For this initial condition, there are multiple solutions that are combinations of the regular triple junctions and regular quadruple points, where four regular triple junctions meet at an angle of approximately $109.5^{\circ}$.
\end{abstract}



\begin{keyword}
self-expanding solution, multiphase, quadruple point \sep triple junction


\end{keyword}

\end{frontmatter}



\section{Introduction}
Interface problems have long attracted attention in materials science and geometric analysis, where one seeks partitions of space that minimize total interfacial area under physical or geometric constraints. Plateau's experiments (1873) and Taylor's analysis (1976) revealed that the only stable singularities of area-minimizing configurations in $\mathbb{R}^3$ are the regular triple junction and the regular quadruple point, where interfaces meet at $120^\circ$ and $109.47^\circ$ angles, respectively~\cite{plateau1873, taylor1976}. Building on Almgren’s notion of $(\mathbb{M},\epsilon,\delta)$-minimal sets~\cite{almgren1976existence}, these results established the geometric prototypes for admissible singular minimal structures in variational models of interfacial energy.

In the smooth regime, mean curvature flow provides the canonical evolution law 
for area-minimizing and curvature-driven motions. The multiphase mean curvature flow, introduced by Mullins~\cite{mullins1956}, models the evolution of grain boundaries in polycrystals under heat treatment, offering a physically motivated example of such geometric flows.

To extend beyond the smooth setting, Brakke~\cite{brakke2015} formulated a weak theory of mean curvature flow in the language of varifolds, providing a measure-theoretic framework that accommodates singularities and topological changes. Even within this framework, however, questions of short-time existence, long-time regularity, and convergence of multiphase flows remain delicate analytical problems closely related to the geometry of singularities.

Substantial progress has been made in clarifying these issues. Mantegazza, Novaga, and Tortorelli~\cite{mantegazza2004} established short-time existence for curve-shortening flows with triple-junction initial data under suitable compatibility conditions. Ilmanen, Neves, and Schulze~\cite{ilmanen2019short} later proved short-time existence for networks with non-smooth triple junctions, removing the regularity assumption on the initial configuration. For higher-dimensional or graphical settings, further advances on local regularity and short-time existence were obtained in~\cite{freire2010, schulze2020local}. Finally, Kim and Tonegawa~\cite{kim2017} proved global-in-time existence of Brakke’s mean curvature flow without any parametrization or dimensional restrictions, thus completing an essential part of the analytic framework for multiphase interface evolution.

Motivated by the planar analysis of Mazzeo and S\'{a}ez~\cite{mazzeo2007}, we extend the geometric framework to three dimensions and formulate an weighted area variational principle for self-expanding interfaces. This approach reveals a structural dichotomy in the evolution of singular minimal configurations: while regular quadruple points persist as stable zero-dimensional singularities under multiphase mean curvature flow, the one-dimensional line junctions may extend toward infinity, giving rise to regular triple networks on the ideal boundary~$\mathbb{S}^2_\infty$. Thus, the intrinsic junction geometry within finite domains remains coherent, even as its asymptotic extension encodes the global topology of the evolving interface.

\smallskip
\noindent\textbf{Organization of the paper.}
We begin in Section~\ref{Multi3D} by introducing multiphase surface networks in $\mathbb{R}^3$ and the class of admissible asymptotic cones $\mathcal{C}_0$. Section~\ref{ConformalCor} establishes the conformal correspondence between hyperboloid, Poincar\'e ball, and Euclidean models. In Section~\ref{InitAsym}, we use results from \cite{BernsteinWang2021} on self-expanding solutions to control the asymptotic behavior of each planar end in the weighted metric $g$, ensuring that truncated-domain approximations converge to surfaces asymptotic to the prescribed cone $\mathcal{C}_0$. Finally, in Section~\ref{LocGloEx}, we present our main theorem: the construction of global area-minimizing multiphase surfaces in $\mathbb{R}^3$ asymptotic to a prescribed cone, with connected support and persistent junction structures.

\smallskip
\noindent\textbf{Main contributions.}
We consider the existence problem for connected, regular multiphase surfaces in $\mathbb{R}^3$ with prescribed asymptotic boundary given by a union of planes $\mathcal{C}_0\cap \mathbb{S}^2_\infty$. Our main contributions are as follows:

\begin{enumerate}
    \item \textbf{Truncated-domain approximation and asymptotic control.} 
    We employ truncated-domain minimization together with asymptotic analysis of self-expanding solutions \cite{BernsteinWang2021} to construct a sequence of bounded-area minimizers whose flat-limit exists globally in $\mathbb{R}^3$ and remains asymptotic to the prescribed cone $\mathcal{C}_0$.

    \item \textbf{Hyperbolic barrier argument for junction persistence.} 
    Using the conformal diffeomorphism between $\mathbb{R}^3$ and the Poincaré ball, we develop a barrier construction that prevents triple curves and quadruple points from collapsing or escaping to infinity. This guarantees the persistence of singular junctions in the flat-limit surfaces.

    \item \textbf{Existence of global minimizers with connected support.} 
    Combining the above, we establish global area-minimizing multiphase surfaces in $\mathbb{R}^3$ with connected support, regular triple curves, and quadruple points, fully realizing the prescribed asymptotic boundary structure at infinity.
\end{enumerate}

In this work, we establish the existence of connected, area-minimizing multiphase surfaces in $\mathbb{R}^3$ asymptotic to a prescribed conical boundary $\mathcal{C}_0\cap \mathbb{S}^2_\infty$. Our approach combines truncated-domain minimization with self-expander asymptotics to construct global limits, while a hyperbolic barrier argument ensures that all triple curves and quadruple points persist in the limit. Consequently, the resulting surfaces are globally connected, area-minimizing, and retain the full junction structure dictated by $\mathcal{C}_0$.

\section*{Notation}
\begin{itemize}
    \item $\Omega_i$ --- the $i$-th \emph{phase domain} in $\mathbb{R}^3$.
    \item $\Gamma_{ij}$ --- the \emph{interface} between $\Omega_i$ and $\Omega_j$ ($i\ne j$), $\Gamma_{ij} := \partial\Omega_i \cap \partial\Omega_j$.
    \item $\vec{n}_{ij}$ --- the \emph{unit normal vector field} on $\Gamma_{ij}$, oriented from $\Omega_i$ toward $\Omega_j$ (thus $\vec{n}_{ji}=-\vec{n}_{ij}$).
    \item $\tau_{ijk}$ --- the \emph{triple junction curve} shared by the three phases $\Omega_i, \Omega_j, \Omega_k$, $\tau_{ijk} := \Gamma_{ij} \cap \Gamma_{jk} \cap \Gamma_{ki}$
    \item $\vec{\nu}_{ijk}$ --- the \emph{unit tangent vector field} along the triple junction $\tau_{ijk}$.
    \item $Q$ --- a \emph{quadruple point} where four distinct phases $\Omega_i, \Omega_j, \Omega_k, \Omega_l$ meet, $Q_{ijkl}:=\Omega_i\cap \Omega_j \cap \Omega_k \cap \Omega_l$ (indices omitted for simplicity).
    \item $\mathcal{H}^k$ --- the $k$-dimensional Hausdorff measure in $\mathbb{R}^n$, $k\leq n$. 
\end{itemize}

\section{Multiphase surface network in 3-dimensional Euclidean space}
\label{Multi3D}
In the multiphase setting, we consider a finite collection of disjoint open sets $\{\Omega_\alpha\}_{\alpha\in\mathtt{A}}$ in $\mathbb{R}^3$ where $\mathtt{A}$ is a finite index set with $\#\mathtt{A} = K$, referred to as phase regions, whose common boundaries form a network of smoothly embedded surfaces separating the distinct phases.
Following the framework of planar network flow developed in \cite{mazzeo2007,ilmanen2019short}, we extend the formulation to three dimensions and interpret these boundary surfaces—called interfaces—as the evolving phase boundaries in~$\mathbb{R}^3$. 
\begin{definition}[Multiphase surface network]\label{MultiNet}
A \emph{multiphase surface network} in $\mathbb{R}^3$ is a finite collection
\[
\Gamma=\bigcup_{i\in\mathtt{I}}\Gamma_i,\qquad \mathtt{I}=\{1,\dots,N\},
\]
of pairwise distinct, oriented $C^2$ interfaces $\Gamma_i \subset \mathbb{R}^3$, which represent the common boundaries of a family of disjoint open phase regions 
$\{\Omega_\alpha\}$ in $\mathbb{R}^3$. They satisfy the following conditions:
\begin{enumerate}[label=(\roman*)]
\item \textbf{Interfaces.}
Each $\Gamma_i$ is smoothly embedded away from its boundary, and any intersection between two distinct interfaces occurs only along their common boundary curves.
\item \textbf{Pairwise intersections.}
For any $i\neq j$, the intersection $\Gamma_i\cap\Gamma_j$ is either empty or a common boundary curve 
\[
\sigma_{ij} := \Gamma_i \cap \Gamma_j,
\]
which is a properly embedded $C^2$ curve in $\mathbb{R}^3$.  
Each $\sigma_{ij}$ is oriented consistently with the induced orientations of $\Gamma_i$ and $\Gamma_j$.
\item \textbf{$1$-junctions (line singularities).}
Each curve $\tau_{\mathtt{I}'}$ is the common boundary of exactly $\#\mathtt{I}'$ distinct interfaces 
\[
\tau_{\mathtt{I}'}:=\bigcap_{i\in\mathtt{I}'}\Gamma_i, \qquad \#\mathtt{I}'\ge 3,
\]
which meet smoothly along a common embedded curve $\tau_{\mathtt{I}'}$. These intersection curves are called $1$-junctions.
\item \textbf{$0$-junctions (point singularities).}
Endpoints of $1$-junctions where finitely many distinct line junctions meet are called $0$-junctions. Depending on the number of incident $1$-junctions, these points correspond to triple, quadruple, or higher-order junctions.
\item \textbf{Boundary.}
If the network lies inside a bounded domain $\Omega \subset \mathbb{R}^3$, 
its boundary $\partial \Gamma$ consists of those $1$-junction curves that either terminate on $\partial \Omega$ or are entirely contained in $\partial \Omega$. 
In the case of unbounded interfaces, $\partial \Gamma$ is understood as the set of curves which are endpoints of exactly one interface, corresponding to interfaces that cluster singly at infinity.
\end{enumerate}
\end{definition}

A multiphase surface network is called \emph{regular} if its junctions satisfy the additional regularity constraints dictated by area-minimizing conditions for minimal surfaces (cf.~\ref{1stVar}).
\begin{itemize}
\item[($\mathrm{iii}^\prime$)] \textbf{$1$-junction regularity.} 
Each $1$-junction curve $\tau_{\mathtt{I}'}$ must satisfy the \emph{equal-angle condition}, with dihedral angles $120^\circ$ between adjacent interfaces. Explicitly, exactly three interfaces meet along any line junction:
\[
\sum_{i=1}^3\vec{n}_i=0,
\]
where $\vec{n}_i$ are the normal vectors on their interfaces.
\item[($\mathrm{iv}^\prime$)] \textbf{$0$-junction regularity.} 
Each $0$-junction satisfies the \emph{geometric balancing condition}, forming a regular tetrahedral structure, and only quadruple points are allowed:
\[
\sum_{k=1}^{4} \vec{\nu}_k = 0
\]
where $\vec{\nu}_i$ is tangent vectors along the incident regular $1$-junctions $\tau_i$.
\end{itemize}
\begin{remark}
This definition generalizes the notion of surface clusters with triple edges introduced by Schulze and White~\cite{schulze2020local} to include regular quadruple points, in agreement with the equilibrium configurations of soap-film minimal surfaces studied by Taylor~\cite{taylor1976} and the calibrated structures analyzed by Lawlor~\cite{LawlorMorgan1994}.
\end{remark}

Let $\mathcal{C}_0 = \bigcup_{i=1}^N P_i$ be a finite union of planes meeting at the origin, 
so that $\bigcap_{i=1}^N P_i = \{0\}$ and the complement of $\mathcal{C}_0$ consists of finitely many open regions.
We impose the following geometric conditions:
\begin{itemize}
\item \textbf{Region structure.}  
Each region enclosed by $\mathcal{C}_0$ is bounded by $m$ planes with $2 \le m \le 5$.  
For each $i$, the intersection $P_i \cap \mathbb{S}^2$ defines a great-circle arc $\gamma_i$, so that
\[
\mathcal{C}_0 \cap \mathbb{S}^2 = \bigcup_{i=1}^N \gamma_i
\]
forms a geodesic network on the sphere.
\item \textbf{Junction structure.}  
Each intersection point of this geodesic network on $\mathbb{S}^2$ corresponds to a regular triple junction, where exactly three arcs meet at equal $120^{\circ}$ angles.
\end{itemize}

\begin{remark}
\label{LApexSolid}
Assume that four phases meet at a quadruple point with a locally tetrahedral configuration. 
In the case of a regular tetrahedral arrangement, the inscribed spherical cone at the quadruple point has a half-apex angle and the corresponding solid angle is
\begin{equation}\label{lowerSolid}
\theta = \cos^{-1}\!\left(\frac{1}{\sqrt{3}}\right),\qquad
\omega_{\min} = 2\pi\!\left(1 - \frac{1}{\sqrt{3}}\right).
\end{equation}
This inscribed spherical cone provides a geometric lower bound that prevents the local configuration from degenerating, 
ensuring that the solid angle at the quadruple point cannot collapse to zero under tetrahedral symmetry.
\end{remark}

Define the metric
\begin{align}\label{SelfEg}
g(\mathbf{x})=e^{\frac{|\mathbf{x}|^2}{4}}d\mathbf{x}^2,\qquad \mathbf{x}=(x_1, x_2, x_3)\in\mathbb{R}^3
\end{align}
which is complete and negatively curved. In the following argument, we let $\gamma_1,\cdots, \gamma_N$ be the prescribed boundary curves of $\mathcal{C}_0\cap \mathbb{S}^2$.

\begin{theorem}[Main Theorem]
\label{MainThm}
Let $\mathcal{C}_0 \subset \mathbb{R}^3$ be a finite union of half-planes meeting along lines through the origin, and let $\{\gamma_i\}_{i=1}^N \subset \mathbb{S}^2$ denote the corresponding intersection curves on the sphere at infinity.  
Then the family of self-similar solutions to the mean curvature flow emanating from $\mathcal{C}_0$, whose evolving surface network remains connected for all $t>0$, is in one-to-one correspondence with the set of (possibly disconnected) regular multiphase surface networks contained in the unit ball. Each interface is minimal with respect to the metric $g$ and the boundary of the surface network coincides with the prescribed asymptotic curves.
Moreover, there exists at least one (and generically finitely many) connected regular multiphase surface network realizing these asymptotic data.  
Finally, the corresponding self-similar solutions extend smoothly to a one-parameter family of regular networks on the parabolic blow-up of $\mathbb{R}^3 \times \mathbb{R}^+$ at the spacetime origin.
\end{theorem}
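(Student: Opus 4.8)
\section*{Proof proposal for the Main Theorem}

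The plan is to reduce the self-similar problem to a boundary-value problem for minimal surfaces in the conformally changed metric $g$ of \eqref{SelfEg}, solve that problem by a truncated-domain minimization, and then run a hyperbolic barrier argument to keep the junction structure from degenerating in the limit. First I would make the self-similarity reduction explicit: a mean curvature flow $t\mapsto \Sigma_t$ emanating from the cone $\mathcal{C}_0$ is self-expanding precisely when $\Sigma_t=\sqrt{t}\,\Sigma_1$, and the profile $\Sigma:=\Sigma_1$ must satisfy $\vec H = \tfrac12 \mathbf{x}^\perp$; this is exactly the Euler--Lagrange equation of the weighted area functional $\int e^{|\mathbf{x}|^2/4}\,d\mathcal{H}^2$, i.e.\ $\Sigma$ is minimal for $g$. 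The junction conditions $(\mathrm{iii}')$ and $(\mathrm{iv}')$ are conformally invariant (the $120^\circ$ and tetrahedral balancing relations involve only angles, which $g$ preserves), so a self-expanding multiphase flow staying connected for $t>0$ corresponds to a connected regular $g$-minimal network $\Sigma$; rescaling/compactness at infinity (using the asymptotic analysis of Section~\ref{InitAsym} and \cite{BernsteinWang2021}) forces each planar end of $\Sigma$ to limit to a geodesic arc $\gamma_i$ on $\mathbb{S}^2_\infty$, which establishes both directions of the claimed bijection once one checks that conversely each abstract $g$-minimal network with boundary $\bigcup\gamma_i$ generates a flow via the self-similar ansatz. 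Here the conformal correspondence of Section~\ref{ConformalCor} between $(\mathbb{R}^3,g)$, the hyperboloid, and the Poincar\'e ball is used to view $\Sigma$ as a minimal surface with prescribed ideal boundary, making the boundary curves $\gamma_i\subset\mathbb{S}^2_\infty$ genuine asymptotic data rather than free boundaries.

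Next I would prove existence of at least one connected minimizer. I would exhaust $\mathbb{R}^3$ by balls $B_R$, minimize the weighted area among multiphase partitions of $B_R$ whose trace on $\partial B_R$ agrees with the cone $\mathcal{C}_0\cap\partial B_R$, and obtain $g$-minimizers $\Sigma_R$ with uniformly bounded weighted area (the cone itself being a competitor gives the bound). Taylor's regularity theory \cite{taylor1976} applied in the metric $g$ (which is smooth and uniformly equivalent to the Euclidean one on compact sets) guarantees that each $\Sigma_R$ is a regular multiphase network: interfaces are $C^{1,\alpha}$, singularities are regular triple curves and quadruple points only. A flat/varifold compactness argument then extracts a limit $\Sigma_\infty$ that is $g$-minimal on all of $\mathbb{R}^3$; the asymptotic control from Section~\ref{InitAsym} ensures $\Sigma_\infty$ is asymptotic to $\mathcal{C}_0$ and in particular nonempty with the prescribed $\gamma_i$ as ideal boundary. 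Genericity of finiteness I would phrase as: for $\mathcal{C}_0$ outside a meager/degenerate set the minimizer is nondegenerate, so the set of $g$-minimal connected networks with this boundary data is discrete and, being contained in $\bar B$ with uniformly bounded area and curvature, finite.

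The main obstacle — and the step I would spend the most care on — is contribution (2): ruling out that, in passing to the limit $R\to\infty$ (or along any minimizing sequence), a triple curve shrinks to a point and disappears, or a whole junction cluster drifts off to $\mathbb{S}^2_\infty$, either of which would disconnect $\Sigma_\infty$ or destroy the junction structure. My plan is the hyperbolic barrier construction: transport the problem to the Poincar\'e ball via Section~\ref{ConformalCor}, where $g$-minimal surfaces become minimal surfaces of the hyperbolic metric and the quantitative rigidity of totally geodesic planes and of Taylor cones is available. Against \emph{collapse} I would use the solid-angle lower bound $\omega_{\min}=2\pi(1-1/\sqrt3)$ of Remark~\ref{LApexSolid}, which is a conformally invariant obstruction: a quadruple point cannot have its incident tetrahedral cone pinch to zero solid angle, and a monotonicity/density argument in $g$ then forbids a triple curve from collapsing without creating a lower-density point than the regular junction allows. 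Against \emph{escape to infinity}, I would build explicit hyperbolic catenoid-type barriers anchored near $\mathbb{S}^2_\infty$: since the hyperbolic distance from the center to the sphere is infinite while the weighted area of any surface reaching far out grows, a minimizer cannot push junctions arbitrarily far without beating the cone competitor, so the junction set stays in a fixed compact ball $\bar B_{R_0}$. Combining the two, the limit retains all triple curves and quadruple points and is connected. Finally, smoothness of the resulting one-parameter family on the parabolic blow-up of $\mathbb{R}^3\times\mathbb{R}^+$ at the origin follows from the self-similar ansatz $\Sigma_t=\sqrt t\,\Sigma$: in blow-up coordinates $(\mathbf{y},s)=(\mathbf{x}/\sqrt t,\log t)$ the flow is the \emph{stationary} network $\Sigma$ for $g$, whose regularity we have already established, and standard parabolic regularity for multiphase mean curvature flow away from $t=0$ (as in \cite{kim2017}, \cite{schulze2020local}) upgrades this to a smooth family of regular networks for $t>0$, with the prescribed cone attained as $t\to 0^+$.
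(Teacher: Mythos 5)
Your proposal follows essentially the same route as the paper's proof: truncated-domain minimization of the weighted area functional over flat chains with $\mathbb{Z}_{K+1}$ coefficients (Theorem~\ref{LExist}), flat-chain compactness with a diagonal extraction to produce the global limit, the self-expander asymptotics of Section~\ref{InitAsym} (via \cite{BernsteinWang2021}) to pin each planar end to its asymptotic plane, and a hyperbolic barrier argument in the Poincar\'e ball combined with the solid-angle bound of Remark~\ref{LApexSolid} and density lower bounds to prevent junctions from collapsing or escaping, exactly as in Lemma~\ref{lem:persistence}. The one point of divergence is minor: for the escape-to-infinity step you propose hyperbolic catenoid-type barriers, whereas the paper uses the Lobachevsky angle-of-parallelism estimate to show that triple curves and quadruple points too close to $\mathbb{S}^2_\infty$ would violate the $120^\circ$ and tetrahedral angle conditions --- both are instances of the same barrier philosophy and either would serve.
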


\section{Conformal Correspondence among the Hyperboloid, Poincar\'{e} Ball, and Euclidean Models}\label{ConformalCor}
In this section, we describe how the hyperboloid model of hyperbolic space naturally induces the conformal structure of the Poincaré ball model through stereographic projection in the Minkowski space $\mathbb{R}^{3, 1}$.
This correspondence provides a conformal equivalence between $\mathbb{B}^3$ and the Euclidean space $\mathbb{R}^3$. Such an identification enables a compact geometric representation of hyperbolic isometries on $\overline{\mathbb{B}^3}$. 
In particular, the conformal structure originally defined on the unbounded Euclidean domain $\overline{\mathbb{R}^3}:=\mathbb{R}^3 \cup \{\infty\}$ can be equivalently studied within the compact set $\overline{\mathbb{B}^3}:=\mathbb{B}^3 \cup \mathbb{S}^2_{\infty}$, providing a convenient framework for analyzing the existence and correspondence of conformal structures between the Euclidean and hyperbolic settings.

Let $\mathbb{R}^{3,1}$ denote the Minkowski space, that is, the four-dimensional vector space $\mathbb{R}^4$ endowed with the Lorentzian inner product $\langle\cdot,\cdot\rangle$.
For $\mathbf{x}=(x_1,x_2,x_3,x_4)$ and $\mathbf{y}=(y_1,y_2,y_3,y_4)$ in $\mathbb{R}^4$, this inner product is defined by
\begin{equation*}
\langle \mathbf{x}, \mathbf{y}\rangle := \sum_{i=1}^3 x_i y_i - x_4 y_4.
\end{equation*}
The induced metric tensor on $\mathbb{R}^{3,1}$ is therefore
\begin{equation*}
g_{_\mathrm{M}} := dx_1^2 + dx_2^2 + dx_3^2 - dx_4^2.
\end{equation*}
The hyperbolic space $\mathbb{H}^3$ is realized as the upper sheet of the two-sheeted hyperboloid:
\begin{equation*}
\mathbb{H}^3:=\left\{\mathbf{x}\in\mathbb{R}^{3, 1}\mid \mbox{$\langle \mathbf{x}, \mathbf{x}\rangle=-1$, $x_4\geq 1$}\right\}.
\end{equation*}
Analogous to the compactification of $\mathbb{R}^3$ by adding the point at infinity, there exists a corresponding compactification in $\mathbb{R}^{3,1}$.
Define the light cone
\begin{equation*}
L=\{\mathbf{l}\in\mathbb{R}^{3,1}\backslash\{\mathbf{0}\}\mid \langle \mathbf{l}, \mathbf{l}\rangle=0\},
\end{equation*}
on which the multiplicative group $\mathbb{R}^{\times}$ acts by scaling:
\begin{equation*}
\mathbb{R}^{\times}\times L\to L,\quad (r, \mathbf{l})\mapsto r\mathbf{l}.
\end{equation*}
The sphere at infinity of $\mathbb{H}^3$ is then given by the quotient space 
\[
\mathbb{S}^2_{\infty}=L/ \mathbb{R}^{\times}.
\]
\begin{definition}
The Poincar\'{e} ball model of hyperbolic space is the open submanifold
\begin{equation*}
\mathbb{B}^3:=\left\{\mathbf{u}=(u_1, u_2, u_3)\in\mathbb{R}^3\mid \left|\mathbf{u}\right|<1\right\},\label{ballmod}
\end{equation*}
equipped with the conformal metric 
\begin{equation*}
g_{_\mathrm{P}}=\frac{4}{\left(1-\left|\mathbf{u}\right|^2\right)^2}\left(du_1^2+du_2^2+du_3^2\right).
\end{equation*}
\end{definition}
In complete analogy with the conformal equivalence between $\mathbb{S}^2$ and $\mathbb{R}^2$ in complex analysis, the stereographic projection $\pi_{h}:\mathbb{H}^3\longrightarrow\mathbb{B}^3\subset\mathbb{R}^3$ is defined by,
\begin{align}\label{stereo}
\mathbf{x}=\left(x_1, x_2, x_3, x_4\right)\mapsto\mathbf{u}=\left(\frac{x_1}{1+x_4}, \frac{x_2}{1+x_4}, \frac{x_3}{1+x_4}\right).
\end{align}
Its inverse mapping is given by
\begin{equation}
\mathbf{u}=\left(u_1, u_2, u_3\right)\mapsto\mathbf{x}=\left(\frac{2u_1}{1-|\mathbf{u}|^2}, \frac{2u_2}{1-|\mathbf{u}|^2}, \frac{2u_3}{1-|\mathbf{u}|^2}, \frac{1+|\mathbf{u}|^2}{1-|\mathbf{u}|^2}\right).
\end{equation}
Geometrically, $\pi_h$ projects each point $\mathbf{x}\in\mathbb{H}^3$ onto $\mathbb{B}^3$ along the affine line passing through $(0,0,0,-1)\in\mathbb{R}^{3,1}$.
Both $\pi_h$ and $\pi_h^{-1}$ are diffeomorphisms, and under this correspondence, the Minkowski metric pulls back as
\begin{equation}\label{pullback}
g_{_\mathrm{M}}(d\mathbf{x}, d\mathbf{x})
= \langle d\mathbf{x}, d\mathbf{x} \rangle
= \pi_h^* g_{_\mathrm{P}}(d\mathbf{u}, d\mathbf{u}).
\end{equation}
Consequently, $(\mathbb{H}^3, g_{_\mathrm{M}})$ is isometric to $(\mathbb{B}^3, g_{_\mathrm{P}})$ and conformal to $(\mathbb{R}^3, \delta)$. 
Although the hyperboloid model $\mathbb{H}^3$ admits a simple linear isometric group action in $\mathbb{R}^{3,1}$, it is embedded as a hypersurface in four-dimensional space, which makes the visualization of its geometric structures less intuitive.
In contrast, the Poincar\'{e} ball model $\mathbb{B}^3$ is realized directly as a subset of $\mathbb{R}^3$, whose isometry group acts by M\"{o}bius (or linear fractional) transformations.
Hence, the Poincar\'{e} ball model provides a more convenient framework for describing the geometry of hyperbolic space.

If we regard $\mathbb{B}^3$ as a subset of $\mathbb{R}^3$, then the one-point compactification of $\mathbb{R}^3$ can be expressed as
\begin{equation}\label{onepoint}
\overline{\mathbb{R}^3}:=\mathbb{R}^3\cup\{\infty\}.
\end{equation}
On the other hand, the Poincar\'{e} ball model can be compactified by adjoining the ideal boundary $\mathbb{S}^2_{\infty}$ tangent to $\mathbb{B}^3$, i.e.,
\begin{equation}\label{idealbdry}
\overline{\mathbb{B}^3}:=\mathbb{B}^3\cup\mathbb{S}^2_{\infty}.
\end{equation}
From the stereographic correspondence established in \eqref{stereo}–\eqref{pullback}, we have that $\mathbb{R}^3$ is conformal to $\mathbb{B}^3$.
Moreover, by Proposition~6.17 in \cite{jensen2016}, there exists a conformal diffeomorphism between $\mathbb{S}^2$ and $\mathbb{S}^2_{\infty}$.
Therefore, the conformal geometry originally defined on the unbounded Euclidean domain $\overline{\mathbb{R}^3}=\mathbb{R}^3\cup\{\infty\}$ can be equivalently studied within the compact set $\overline{\mathbb{B}^3}=\mathbb{B}^3\cup\mathbb{S}^2_{\infty}$.
This identification allows a complete description of hyperbolic isometries in terms of conformal mappings on the boundary sphere.
In comparison with the hyperboloid model, the Poincar\'{e} ball model provides a more intuitive and compact geometric representation for analyzing conformal structures within the finite domain $\mathbb{B}^3\cup\mathbb{S}^2_{\infty}$, corresponding to those in the infinite domain $\mathbb{R}^3\cup\{\infty\}$.
Without loss of generality, we hereafter identify $\mathbb{S}^2$ in \eqref{onepoint} with $\mathbb{S}^2_{\infty}$ in \eqref{idealbdry}.
Further details on hyperbolic geometry can be found in \cite{BenedettiPetronio1992,jensen2016,Petersen2006}.

Under the conformal identification between the compactified Euclidean space 
$\overline{\mathbb{R}^3}$ 
and the compactified Poincar\'{e} ball model 
$\overline{\mathbb{B}^3}$, 
we introduce the following notion of the ideal boundary.
\begin{definition}
\label{IdeaBdry}
Let $\Gamma \subset \mathbb{R}^3$ be a smooth, complete noncompact embedded surface. The (ideal) boundary curve of $\Gamma$ on the sphere at infinity $\mathbb{S}^2_{\infty}$
is defined by the radial limit
\[
\sigma := \lim_{r\to\infty} \frac{1}{r}(\Gamma \cap \mathbb{S}^2_r),
\]
where $\mathbb{S}^2_r$ denotes the Euclidean sphere of radius $r$ centered at the origin. Equivalently, we may write $\sigma = \Gamma \cap \mathbb{S}^2_{\infty}$ to emphasize its geometric realization on the conformal boundary of the compactified space.
\end{definition}

\section{Self-expanding solutions to the multiphase mean curvature flow}\label{InitAsym}
Let $\Gamma \subset \mathbb{R}^{3}$ be a smooth hypersurface. A self-expanding solution to the mean curvature flow satisfies
\begin{equation}\label{eq:SE}
H = \frac{1}{2}\langle \mathbf{x}, \vec{n} \rangle,
\end{equation}
where $H$ is the mean curvature, $\mathbf{x}$ the position vector, and $\vec{n}$ the unit normal. Self-expanders arise as forward self-similar models of the mean curvature flow originating from a conical singularity.

Introducing the metric $g$ in~\eqref{SelfEg} transforms equation~\eqref{eq:SE} 
into the minimal surface equation associated to the weighted area functional~\eqref{entropyF}. The metric $g$ is complete with nonpositive sectional curvature decaying to zero at infinity; that is, it is asymptotically flat.  
Nevertheless, asymptotic flatness alone does not ensure good control of minimal surfaces at infinity: a minimal surface in $(\mathbb{R}^3, g)$ may oscillate or drift away from its prescribed asymptotic cone $\mathcal{C}_0$.

To analyze the behavior of each end of a self-expander, we examine the first variation on the planar sectors of the initial cone $\mathcal C_0$. Since every $1$-junction satisfies the (Plateau) balancing condition, the boundary terms along the
triple junction cancel when the variations of the adjacent sectors are summed. Thus, on each sector, the self-expander equation reduces to its interior part.

Because the (Plateau) boundary contributions cancel, the interior equation on each planar sector decouples at the level of the linearized operator. Consequently, in the far field, interactions between distinct planar ends become negligible, and each end may be treated independently as a graph over its asymptotic plane.

For the $i$-th plane end, the self-expander $\Gamma_i$ is graphical outside a compact set:
\[
\Gamma_i \cap \{\mathbf{x}\in\mathbb{R}^3 \mid|\operatorname{Proj}_{P_i}(\mathbf{x})|>R \} 
   = \{ \mathbf{x} + u_i(\mathbf{x}) \, \vec{n}_i \mid \mathbf{x} \in P_i \},\qquad R\gg 1,
\]
where $\vec{n}_i$ is the unit normal to $P_i$ and $u_i$ is its height function.  
Linearizing~\eqref{eq:SE} and computing the second variation of the weighted area functional 
yield the Jacobi operator governing the asymptotic behavior,
\[
L u_i := \Delta_{P_i} u_i + \frac{1}{2} \mathbf{x} \cdot \nabla u_i - \frac{1}{2} u_i.
\]
For asymptotically conical ends, the analysis of \cite{BernsteinWang2021} shows that
solutions of the Jacobi equation satisfy strong weighted energy estimates (Lemma~6.1) and admit a complete asymptotic expansion at infinity (Proposition~6.2). In particular, all non-decaying behaviors are excluded, and the height function $u_i(\mathbf{x})$ converges to zero in an appropriate weighted Hölder sense. Thus, no oscillatory behavior or deviation from the asymptotic plane can occur along any planar end.

This asymptotic control will be used in the next section to establish the existence of a global self-expanding solution asymptotic to the initial cone $\mathcal C_0$.

\section{Existence}\label{LocGloEx}
We consider the existence problem for a connected regular multiphase surface in $\mathbb{R}^3$ with prescribed boundary curves and junction structure $\mathcal{C}_0 \cap \mathbb{S}^2_\infty$.

To address this problem, we study the minimizing problem in the class $\mathcal{F}_2(\mathbb{R}^3, \mathbb{Z}_{K+1})$ of flat 2-chains $T$ with coefficients in $\mathbb{Z}_{K+1}$. Here, the nonzero coefficients encode the multiplicity of each face: the unit norm condition ensures that each face contributes exactly once to the size of the chain.
For a comprehensive treatment of flat chains and multiplicities, see \cite{white1996, morganGMT}. 
Below we briefly recall the key properties of flat chains relevant to the multiphase surface problem.

\smallskip
Here, the coefficients in $\mathbb{Z}_{K+1}$ encode the region assignment: $0$ represents points outside all regions, while nonzero elements $1,\dots,K$ correspond to the $K$ distinct regions.

\begin{itemize}
\item A flat 2-chain $T$ in $\mathcal{F}_2(\mathbb{R}^3, \mathbb{Z}_{K+1})$ is defined as the completion of the space of 2-dimensional polyhedral chains under the flat norm $\mathcal{F}$.
\item The norm of each nonzero coefficient $a_{T} \in \mathbb{Z}_{K+1}$ is defined to be one. The size of a flat chain $T$ is then defined by
\[
\operatorname{Size}(T) := \sum_\alpha |a_{T}| \, \mathbb{M}(T) = \sum_\alpha \mathbb{M}(\alpha),
\]
where $\mathbb{M}(T)$ denotes the mass of the chain $T$ with respect to the metric $g$ in~\eqref{SelfEg}. 
For $T$ a $C^1$-surface with unit multiplicity, the mass coincides with its area, so that $\operatorname{Size}(T)$ recovers the total area of the surface weighted by multiplicity.
\end{itemize}

We formulate the multiphase surface problem as a minimization problem for the size functional $\mathrm{Size}(T)$ over the class $\mathcal{F}_2(\mathbb{R}^3, \mathbb{Z}_{K+1})$.
To prescribe the boundary data and the phase structure, consider a finite union of planes
\[
\mathcal{C}_0 = \bigcup_{i=1}^N P_i
\]
passing through the origin, so that $\bigcap_{i=1}^N P_i = \{0\}$ and $\mathbb{R}^3\setminus \mathcal{C}_0$ decomposes into $K$ open regions.
For $R>0$, the induced boundary on the sphere of radius $R$ is
\begin{align*}
\gamma^R:=\mathcal{C}_0\cap \mathbb{S}^2_R(0)=\bigcup_{i=1}^{N}\gamma_i^R.
\end{align*}
\begin{theorem}
\label{LExist}
Given the boundary curves $\gamma^R$, there exists a connected flat $2$-chain $\Gamma^R$ with coefficients in $\mathbb{Z}_{K+1}$ such that
\begin{align*}
\operatorname{Size}(\Gamma^R)
= \inf_{T\in\mathcal{A}^R}\operatorname{Size}(T),
\end{align*}
where the admissible set 
\[
\mathcal{A}^R
:= \left\{\,T\in \mathcal{F}_2(\mathbb{B}^3_R(0),\mathbb{Z}_{K+1}) \mid
\partial T=\gamma^R,\ \operatorname{supp}(T)\ \text{connected}\,\right\}.
\]
In particular, $\Gamma^R$ attains the minimum.

Moreover, each sheet of the support $\Gamma^R$ is a minimal surface with respect to the metric:
\[
g(\mathbf{x})=e^{\frac{|\mathbf{x}|^2}{4}}d\mathbf{x}^2,\qquad \mathbf{x}=(x_1, x_2, x_3)\in\mathbb{R}^3.
\] 
and all interior $1$-junctions and $0$-junctions are regular triple curves and regular quadruple points, respectively.
\end{theorem}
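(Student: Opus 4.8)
The plan is to produce $\Gamma^R$ by the direct method of the calculus of variations in the space of flat $2$-chains with coefficients in $\mathbb{Z}_{K+1}$, equipped with the weighted mass $\mathbb{M}$ coming from the metric $g$ in~\eqref{SelfEg}, and then to extract the geometric regularity of the minimizer from Almgren's regularity theory for $(\mathbf{M},\epsilon,\delta)$-minimal sets and Taylor's classification of singular cones. First I would check that the admissible class $\mathcal{A}^R$ is nonempty: since $\mathbb{R}^3\setminus\mathcal{C}_0$ has $K$ open regions and $\gamma^R=\mathcal{C}_0\cap\mathbb{S}^2_R$, one can assign the region labels $1,\dots,K$ to these components, which determines a flat $2$-chain supported on $\mathcal{C}_0\cap\overline{\mathbb{B}^3_R}$ with the prescribed boundary; its support is connected because all the planes $P_i$ pass through the origin, so $0\in\operatorname{supp}(T)$ for every such $T$. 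Because $\overline{\mathbb{B}^3_R}$ is compact and the conformal factor $e^{|\mathbf{x}|^2/4}$ is bounded above and below on it, the weighted mass is comparable to the Euclidean mass, so a minimizing sequence $T_j\in\mathcal{A}^R$ has uniformly bounded mass and boundary mass; by the compactness theorem for flat chains with coefficients in a finite group (White, \cite{white1996}), a subsequence converges in the flat norm to a limit $\Gamma^R$, and lower semicontinuity of mass under flat convergence gives $\mathbb{M}(\Gamma^R)\le\liminf\mathbb{M}(T_j)$, hence $\operatorname{Size}(\Gamma^R)=\inf_{\mathcal{A}^R}\operatorname{Size}$. Closure of $\mathcal{A}^R$ under flat limits — in particular preservation of $\partial T=\gamma^R$ and of connectedness of the support — is the first point requiring care; connectedness is not flat-continuous in general, so I would argue it separately (see below).

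Next I would establish interior regularity. The minimizer $\Gamma^R$ is, up to the bounded conformal change of metric, mass-minimizing among flat $\mathbb{Z}_{K+1}$-chains with the given boundary; by the standard argument this makes $\operatorname{supp}(\Gamma^R)$ an $(\mathbf{M},\epsilon,\delta)$-minimal set in the sense of Almgren~\cite{almgren1976existence} in the interior (the $\epsilon,\delta$ absorbing the difference between $g$ and the Euclidean metric and the comparison-surface error from the finite-group coefficient structure). Almgren's regularity theorem then gives that, away from a singular set of Hausdorff dimension at most one, $\operatorname{supp}(\Gamma^R)$ is a $C^{1,\alpha}$ embedded surface, and in fact (elliptic regularity for the minimal-surface equation of $g$, which has smooth coefficients) smooth and minimal with respect to $g$. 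Taylor's theorem~\cite{taylor1976} classifies the possible tangent cones at singular points of a soap-film-type minimizer in $\mathbb{R}^3$: the only ones are the plane, the $Y$-cone (three half-planes at $120^\circ$ along a line), and the $T$-cone (the cone over the $1$-skeleton of a regular tetrahedron). This forces the $1$-junctions to be regular triple curves satisfying $\sum_{i=1}^3\vec n_i=0$ and the $0$-junctions to be regular quadruple points satisfying $\sum_{k=1}^4\vec\nu_k=0$, i.e. exactly conditions $(\mathrm{iii}')$ and $(\mathrm{iv}')$ of Definition~\ref{MultiNet}. One subtlety is that the coefficient group $\mathbb{Z}_{K+1}$ is not $\mathbb{Z}_2$, so a priori a junction could have more than three sheets with cancelling multiplicities; but the unit-norm convention makes the relevant quantity the size, and a $Y$-type comparison reduces the size strictly unless exactly three sheets with mutually distinct region labels meet at $120^\circ$ — this local comparison argument is the place where the combinatorics of $\mathbb{Z}_{K+1}$ must be handled explicitly.

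The remaining point is the connectedness of $\operatorname{supp}(\Gamma^R)$, which does not pass to flat limits automatically. Here I would use the fact that all planes $P_i$ meet at the origin, so the boundary curve $\gamma^R$ is itself connected and every competitor's support must contain it; if a flat limit $\Gamma^R$ had $\operatorname{supp}(\Gamma^R)=A\sqcup B$ with $A,B$ closed, nonempty, disjoint, then $\gamma^R\subset A$ (say) by connectedness of the boundary, and $B$ would be a boundaryless minimal piece — but a nonempty compact minimal surface without boundary in $\overline{\mathbb{B}^3_R}$ touching the interior is impossible by the maximum principle applied to $|\mathbf{x}|^2$ composed with the $g$-Laplacian (the metric $g$ is chosen precisely so that $|\mathbf{x}|^2$ is strictly subharmonic on $g$-minimal surfaces, cf.~\eqref{eq:SE}), so $B$ must lie on $\partial\mathbb{B}^3_R$; ruling out such spurious boundary components and hence upgrading connectedness of the minimizing sequence to connectedness of the limit is, I expect, the main technical obstacle, and I would resolve it by a density/clearing-out estimate showing that any component of $\operatorname{supp}(\Gamma^R)$ meeting the interior carries definite mass, so a mass-bounded minimizing sequence has a uniformly bounded number of components and no component disappears in the limit. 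Once connectedness, $g$-minimality, and the junction structure are in hand, $\Gamma^R$ is the asserted connected regular multiphase surface network, completing the proof. $\qquad\square$
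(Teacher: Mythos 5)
Your proposal follows the same overall strategy as the paper: the direct method in $\mathcal{F}_2(\mathbb{B}^3_R(0),\mathbb{Z}_{K+1})$ with the weighted mass from \eqref{SelfEg}, compactness for flat chains over a finite coefficient group \cite{fleming1966,white1996}, lower semicontinuity of size, and then regularity of the size-minimizer from the Almgren--Taylor theory (the paper instead cites Morgan's regularity theorem for size-minimizing currents \cite{morgan1989}, but the conclusion --- smooth $g$-minimal sheets, $120^\circ$ triple curves, tetrahedral quadruple points --- is identical). The one step where you genuinely diverge is connectedness. The paper's argument is shorter and purely variational: since $\gamma^R=\mathcal{C}_0\cap\mathbb{S}^2_R(0)$ is connected, any decomposition $\Gamma^R=\Gamma_1+\Gamma_2$ into chains with disjoint supports forces $\partial\Gamma_2=0$; additivity of size on disjoint supports together with $\operatorname{Size}(\Gamma_2)>0$ then makes $\Gamma_1$ a strictly better competitor with the same boundary, contradicting minimality. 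You reach the same dichotomy (a boundaryless component $B$) but then kill $B$ with a maximum principle for $|\mathbf{x}|^2$ on $g$-minimal surfaces plus a clearing-out estimate; that route also works (on a self-expander $\Delta_\Sigma|\mathbf{x}|^2=2\bigl(2+\vec H\cdot\mathbf{x}\bigr)=4+|\mathbf{x}^\perp|^2>0$, so a compact boundaryless component is impossible), but it is heavier than necessary, since simply discarding the closed piece already lowers the size. On the other hand your write-up is more careful than the paper on two points it glosses over: nonemptiness of $\mathcal{A}^R$ (your explicit competitor $\mathcal{C}_0\cap\overline{\mathbb{B}^3_R(0)}$), and the fact that connectedness of supports is not preserved under flat convergence, so that it must be recovered a posteriori from minimality rather than inherited from the minimizing sequence; the $\mathbb{Z}_{K+1}$ junction combinatorics you flag is likewise a real issue that the paper delegates entirely to the cited regularity theorem.
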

\begin{proof}
Let $\{T_j\}\subset\mathcal{F}_2(\mathbb{B}^3_R(0),\mathbb{Z}_{K+1})$ be an area-minimizing sequence with connected supports such that $\partial T_j=\gamma^R$ for every $j$.
We verify the uniform bounds required to apply the compactness theorem for flat chains, and then deduce existence by lower semicontinuity.

\smallskip\noindent\textbf{Compactness for flat chains:}

Since $\{T_j\}$ is area-minimizing, there exists $c_1>0$ with
\[
\operatorname{Size}(T_j)\le c_1\qquad\text{for all }j.
\]
Depending only on $R$ and the geometry of $\mathcal{C}_0$, each $\gamma_i^R$ is a finite $\mathcal{H}^1$--rectifiable curve on the sphere $\mathbb{S}^2_R(0)$ and hence there is a constant $C(R)>0$ such that
\[
\operatorname{Size}(\partial T_j)\le \sum_{i=1}^N \mathcal{H}^1(\gamma_i^R)
\le C(R)\sum_{i=1}^N \mathcal{H}^1(\gamma_i),
\]
where $\gamma_i=\lim_{R\to\infty}\gamma_i^R/R$ is the asymptotic boundary determined in
Definition~\ref{IdeaBdry}. Hence, for each $R>0$, $\operatorname{Size}(\partial T_j)$ is uniformly bounded in $j$.

By the compactness theorem for flat chains (cf.~\cite{fleming1966}), from $\{T_j\}$ we may extract a subsequence (denoted again by $T_j$) converging in the flat topology to some
\(\Gamma^R\in\mathcal{F}_2(\mathbb{B}^3_R(0),\mathbb{Z}_{K+1})\) with the boundary \(\partial\Gamma^R=\gamma^R\). 
By lower semicontinuity of the size functional under flat convergence we obtain
\[
\operatorname{Size}(\Gamma^R)\le \liminf_{j\to\infty}\operatorname{Size}(T_j),
\]
so $\Gamma^R$ is area-minimizing in $\mathcal{F}_2(\mathbb{B}^3_R(0),\mathbb{Z}_{K+1})$.

Applying the regularity theorem (cf.~\cite[Theorem 2.6]{morgan1989}) for $m=2$ and $n=3$, a locally area-minimizing flat $2$-chain in a bounded domain has support which is a $C^{0,\alpha}$--surface away from the standard singularities: along $1$-dimensional curves three smooth sheets meet at equal $120^\circ$ angles, and at isolated points four such curves meet at the angle $\cos^{-1}(-1/3)$.

\smallskip\noindent\textbf{Connectedness:}

Suppose, for contradiction, that \(\Gamma^R=\Gamma_1+\Gamma_2\) is a nontrivial decomposition into two flat $2$-chains with disjoint supports, i.e.
\begin{equation}\label{NonDecomp}
\operatorname{supp}(\Gamma^R)=\operatorname{supp}(\Gamma_1)\cup\operatorname{supp}(\Gamma_2),\qquad 
\operatorname{supp}(\Gamma_1)\cap\operatorname{supp}(\Gamma_2)=\emptyset.
\end{equation}
Since \(\partial\Gamma^R=\gamma^R\), we have \(\partial\Gamma_1+\partial\Gamma_2=\gamma^R\).  
For the initial configuration \(\mathcal{C}_0\), the network \(\gamma^R=\mathcal{C}_0\cap\mathbb{S}^2_R(0)\) is connected, a union of great-circle arcs meeting pairwise at intersection points, so \(\gamma^R\) cannot be decomposed into two disjoint nonempty boundary pieces.  
Consequently at least one of \(\partial\Gamma_1,\partial\Gamma_2\) must vanish; without loss of generality assume \(\partial\Gamma_2=0\), so \(\Gamma_2\) is a nontrivial closed component disjoint from the prescribed boundary.

Then \(\Gamma_1=\Gamma^R-\Gamma_2\) is a competitor with the same boundary \(\gamma^R\). 
Moreover, since the supports are disjoint and all nonzero multiplicities equal one, the size is additive on this decomposition and \(\operatorname{Size}(\Gamma_2)>0\); therefore
\[
\operatorname{Size}(\Gamma_1)=\operatorname{Size}(\Gamma^R)-\operatorname{Size}(\Gamma_2)
<\operatorname{Size}(\Gamma^R),
\]
contradicting the minimality of \(\Gamma^R\). Hence, no nontrivial decomposition \eqref{NonDecomp} exists, and \(\operatorname{supp}(\Gamma^R)\) is connected.

\end{proof}

\begin{lemma}
\label{lem:persistence}
Assume the hypotheses of Theorem~\ref{LExist}, and let $\Gamma^r$ be an 
area--minimizing multiphase surface in $\mathbb B_r^3$ with initial boundary 
data induced by the cone $\mathcal{C}_0$. Then there exists a bounded neighborhood 
$\mathcal U\subset\mathbb R^3$ such that, for every radius $r>0$, all regular 
triple $1$-junctions and all quadruple points of $\Gamma^r$ lie inside 
$\mathcal U$, except for the endpoints of triple $1$-junctions that may escape 
to infinity along the rays of $\mathcal{C}_0$.
\end{lemma}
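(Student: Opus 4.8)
The plan is to combine the self-expander asymptotics of Section~\ref{InitAsym} with a barrier argument carried out in the conformally compactified picture of Section~\ref{ConformalCor}. The starting point is that every plane $P_i$ through the origin is itself a self-expander: on $P_i$ one has $\langle\mathbf{x},\vec{n}_i\rangle\equiv 0$ and $H\equiv 0$, so $\mathcal{C}_0$ is a (singular) minimal surface for the metric $g$ of \eqref{SelfEg}. Under the conformal identification $\mathbb{R}^3\simeq\mathbb{B}^3$ of Section~\ref{ConformalCor}, the truncated minimizer $\Gamma^r$ of Theorem~\ref{LExist} becomes a surface $\widehat\Gamma^r\subset\mathbb{B}^3$ that is still minimal for the metric carried over from $g$, with boundary $\widehat\gamma^r$ converging, as $r\to\infty$, to the fixed geodesic network $\mathcal{C}_0\cap\mathbb{S}^2_\infty$ on the ideal boundary. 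The gain is that a neighborhood of infinity becomes a \emph{bounded collar} in $\overline{\mathbb{B}^3}$, in which honest comparison surfaces can be placed; this is where the hyperbolic model enters, and the argument splits into a far-field confinement by barriers followed by a local structure statement.

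In this collar I would construct, on each open phase region of $\mathcal{C}_0$, a one-parameter family of strict sub- and supersolutions of the self-expander equation \eqref{eq:SE} by displacing $\mathcal{C}_0$ slightly into the region. The decay estimates of \cite{BernsteinWang2021} invoked in Section~\ref{InitAsym}---the strong weighted energy bounds and the complete asymptotic expansion, which exclude every non-decaying Jacobi mode---ensure that these barriers can be chosen with a strictness that is \emph{uniform on the whole exterior} $\{|\mathbf{x}|>R_0\}$, with $R_0$ depending only on $\mathcal{C}_0$ and not on $r$; the cancellation of the (Plateau) boundary terms along the triple junctions noted in Section~\ref{InitAsym} is what lets these region-by-region barriers be assembled across the edges of $\mathcal{C}_0$ into admissible competitors. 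Comparing $\Gamma^r$ with the barriers via the maximum principle for area-minimizing flat chains then forces, for all $r>2R_0$, the set $\operatorname{supp}(\Gamma^r)\cap\{|\mathbf{x}|>R_0\}$ to lie in an arbitrarily thin conical neighborhood of $\mathcal{C}_0$ and to be there a union of graphs over the planar sectors of $\mathcal{C}_0$, meeting only along triple curves that are $C^1$-close to the edge lines $P_i\cap P_j$ (each of which is a regular triple line by the junction hypothesis on $\mathcal{C}_0$).

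The lemma then follows. In that thin conical neighborhood the only points lying on three or more sheets are these near-edge triple curves, along which exactly three sheets meet; hence no quadruple point, and no junction other than a near-edge triple arc, can occur outside $\mathbb{B}^3_{R_0}$, the exclusion of quadruple points being reinforced by the solid-angle gap of Remark~\ref{LApexSolid}, whose uniform density lower bound persists in flat limits and prevents a quadruple point from collapsing onto a thin wedge of $\mathcal{C}_0$. The triple curves that do leave $\mathbb{B}^3_{R_0}$ are precisely the near-edge ones, with endpoints on $\mathbb{S}^2_r$ tracking $r\,\widehat{e}_{ij}$, where $\widehat{e}_{ij}$ is the unit direction of $P_i\cap P_j$, so they escape to infinity along the rays of $\mathcal{C}_0$---the permitted exception. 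Setting $\mathcal{U}:=\mathbb{B}^3_{2R_0}$, the inclusion $\Gamma^r\subset\mathbb{B}^3_r\subset\mathcal{U}$ is trivial for $r\le 2R_0$, while for $r>2R_0$ the previous steps place all quadruple points and all non-tail triple arcs in $\mathbb{B}^3_{R_0}\subset\mathcal{U}$. The main obstacle is the uniform-in-$r$ barrier construction \emph{near the non-smooth locus} of $\mathcal{C}_0$: one must build strict self-expander barriers that cross the edges and pass the origin of $\mathcal{C}_0$ while respecting the $120^\circ$ triple-junction conditions, and then apply a maximum principle valid for singular flat chains rather than for smooth hypersurfaces; the remaining ingredients are the asymptotic input of \cite{BernsteinWang2021} and standard compactness and structure results.
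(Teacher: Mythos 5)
Your route is genuinely different from the paper's. The paper confines the junctions by working in the Poincar\'e ball and invoking the hyperbolic angle-of-parallelism together with the solid-angle bound of Remark~\ref{LApexSolid}: sufficiently close to the ideal boundary, the sectional angles of any configuration asymptotic to $\mathcal{C}_0$ drop strictly below $120^\circ$ (respectively below the regular tetrahedral solid angle $\omega_{\min}$), so regular triple curves and quadruple points are forced into a fixed ball $\mathbb{B}^3_\delta$, which is then pulled back to $\mathcal{U}\subset\mathbb{R}^3$. You instead propose a barrier/maximum-principle confinement by translated copies of $\mathcal{C}_0$ (correctly observing that planes through the origin are exact self-expanders, so their translates are strict sub/supersolutions of \eqref{eq:SE}), followed by a far-field graphical decomposition. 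That is a legitimate alternative strategy and in principle closer to a fully rigorous argument, but as written it has two genuine gaps.

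First, the decisive step --- passing from ``$\operatorname{supp}(\Gamma^r)\cap\{|\mathbf{x}|>R_0\}$ lies in a thin conical neighborhood of $\mathcal{C}_0$'' to ``there it consists of exactly three graphs meeting along a single near-edge triple curve, hence contains no quadruple points and no other junctions'' --- does not follow from barrier confinement alone. A thin neighborhood of an edge of $\mathcal{C}_0$ can a priori contain nontrivial singular structure at small scales (for instance a triple line resolving into a pair of nearby quadruple points, or a thin lens bounded by two triple arcs); Taylor's regularity \cite{taylor1976} classifies the local models but does not say where they occur. Excluding such structure far out requires an $\varepsilon$-regularity theorem near triple edges in the spirit of \cite{schulze2020local}, applied uniformly at all scales and all sufficiently large radii, or a density/monotonicity argument; neither is supplied, and the solid-angle gap of Remark~\ref{LApexSolid} does not substitute for it, since a quadruple point sitting inside the thin wedge could still be locally regular. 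Second, the appeal to \cite{BernsteinWang2021} is circular as stated: those weighted energy estimates and asymptotic expansions govern the ends of a self-expander that is already known to be graphical over its asymptotic plane, so they cannot be used to manufacture barriers for the family $\Gamma^r$ before far-field graphicality has been established by other means. You yourself flag the uniform barrier construction across the edges and the origin of $\mathcal{C}_0$, and the validity of the maximum principle for singular size-minimizing chains, as the ``main obstacle''; flagging it does not discharge it, so the confinement step remains incomplete.
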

\begin{proof}
By the regularity theory for multiphase area--minimizers 
(cf.~\cite{taylor1976}), each minimizer $\Gamma^r\subset\mathbb B_r^3$ consists of smooth sheets meeting along a finite union of $C^{1,\alpha}$ triple $1$-junctions, with at most finitely many quadruple points. In particular, the singular set of $\Gamma^r$ is always nonempty in $\mathbb B_r^3$.

Singularities cannot disappear to infinity as $r\to\infty$. Indeed, if the singular set of $\Gamma^r$ escaped to infinity, then for every fixed $R>0$ the surfaces $\Gamma^r$ would be smooth in $\mathbb B_R^3$ for all sufficiently large $r$, contradicting both the prescribed conical boundary data at infinity and the non-smoothness of the limiting cone $\mathcal{C}_0$. This provides the intuitive mechanism preventing degeneration; the rigorous 
obstruction is supplied by the hyperbolic barrier constructions below. 

We work in the unit Poincar\'e ball model of hyperbolic space. By the conformal correspondence of Section~\ref{ConformalCor}, the $120^\circ$ balance condition at each regular triple $1$-junction is preserved under this representation.  
Using the hyperbolic angle-of-parallelism (Lobachevsky's formula), we may choose $\delta_1\in(0,1)$ 
such that for every point on the geodesic curves 
$\mathcal{C}_0\cap\mathbb S^2_{\delta_1}$, the interior angles in the totally geodesic 
plane orthogonal to the junction curve are strictly smaller than $120^\circ$.  
Hence no regular triple $1$-junction lying in $\mathbb B_{\delta_1}^3$ can 
collapse onto a common limiting geodesic on the sphere at infinity.  
Equivalently, each pair of adjacent sheets retains strictly positive 
separation, and the $120^\circ$ configuration persists.

Next, since $\mathcal{C}_0\cap\mathbb S^2$ has only finitely many regular triple points, 
we repeat the preceding construction along every junction curve meeting a 
triple point on the sphere.  
For each such point, the hyperbolic angle-of-parallelism estimate in the 
orthogonal totally geodesic section yields a uniform radius $\delta_2>0$ for 
which all interior angles in these sections remain strictly smaller than
\[
2\cos^{-1}\Bigl(\frac{1}{\sqrt{3}}\Bigr).
\]
By Remark~\ref{LApexSolid}, any quadruple point lying outside 
$\mathbb B_{\delta_2}^3$ would admit an inscribed spherical cone whose 
half-apex angle is strictly smaller than $\theta$, 
and thus whose solid angle is strictly less than $\omega_{\min}$.  
Such a configuration is incompatible with a regular quadruple point.  
Therefore every quadruple point of $\Gamma^r$ lies inside $\mathbb B_{\delta_2}^3$.

Set $\delta=\max\{\delta_1,\delta_2\}$ and define 
$\mathcal U=\pi_h^{-1}(\mathbb B_\delta^3)\subset\mathbb R^3$ via inverse 
stereographic projection.  
The preceding arguments show that every regular triple $1$-junction and every 
regular quadruple point of $\Gamma^r$ lies in $\mathcal U$, except for the terminal 
points of triple $1$-junction that may escape to infinity along the rays 
of $\mathcal{C}_0$. Hence the singular structures inside any fixed bounded regions are stable under increasing radii. Although the number or arrangement of singular structures of $\Gamma^r$ may vary with $r$, at least one nontrivial singular structure persists in every bounded region. This completes the proof.
\end{proof}

\begin{proof}[Proof of Theorem~\ref{MainThm}] 

Let $\{R_j\}_{j=1}^{\infty}$ be any sequence diverging to infinity. 
For each $R_j$, Theorem~\ref{LExist} provides a plane $2$-chain that minimises the area.
\[
\Gamma^{R_j} \in \mathcal{F}_2(\mathbb{B}^3_{R_j}(0), \mathbb{Z}_{K+1})
\]
spanning the boundary curves $\gamma_1^{R_j},\dots,\gamma_N^{R_j}$, with connected support. 

Fix $j$.  
Since each $\Gamma^{R_j}$ lies in the bounded domain $\mathbb{B}^3_{R_j}(0)$ and has uniformly bounded size on $\Gamma^{R_j}$ and $\partial \Gamma^{R_j}$, the compactness theorem for flat chains implies that any sequence of such chains admits a flat-convergent subsequence.  
Applying this to each $j$ and using a standard diagonal extraction, we obtain a sequence
\[
\Gamma_j^{R_j} \;\xrightarrow{\ \mathcal{F}\ }\; \Gamma
\quad\text{in }\mathbb{R}^3,
\]
where $\Gamma \in \mathcal{F}_2(\mathbb{R}^3,\mathbb{Z}_{K+1})$ is the flat limit of the truncated minimizers.

\smallskip\noindent\textbf{Junction persistence:}

Suppose, for contradiction, that a triple curve or quadruple point vanishes at infinity. 
By the conformal diffeomorphism between $\overline{\mathbb{R}^3}$ and $\overline{\mathbb{B}^3}$ in Section~\ref{ConformalCor}, this would correspond to approaching the ideal boundary of the Poincar\'{e} ball model. 
The barrier construction in Lemma~\ref{lem:persistence}, together with the density lower bounds of sheets ($\Theta=1$), triple curves ($\Theta=3/2$), and quadruple points ($\Theta=3\arccos(-1/3)/\pi>1$), prevents reaching $\mathbb{S}^2_\infty$, neck–pinching or collapse of junctions, while the multiphase $\mathbb{Z}_{K+1}$ labeling forbids sheet degeneration that would violate the prescribed boundary or global phase assignment.
Moreover, the multiphase structure encoded by coefficients in $\mathbb{Z}_{K+1}$ ensures that any degeneration of a sheet would alter the global phase assignment or boundary data, producing a nonminimal configuration that can be smoothed to reduce size. 
Therefore, all regular triple curves and quadruple points persist, and no sheet degenerates in the limit.

\smallskip\noindent\textbf{Connected support:}

To show that connectedness persists as $j\to\infty$, suppose for contradiction that the flat limit $\Gamma$ decomposes as
\[
\Gamma = \Gamma^{(1)} + \Gamma^{(2)}, \qquad 
\operatorname{supp}(\Gamma^{(1)}) \cap \operatorname{supp}(\Gamma^{(2)}) = \emptyset,
\]
with both components nontrivial. 

By Lemma~\ref{lem:persistence}, all regular triple curves and quadruple points are contained in $\Gamma$ and carry uniform density lower bounds in a fixed neighbourhood. Hence no junction can vanish or collapse into a thin neck. In particular, any nontrivial component of $\Gamma$ contains at least one such junction structure, which provides a uniform lower bound on the mass in that component.  

Since each truncated minimizer $\Gamma_j^{R_j}$ is connected by Theorem~\ref{LExist} and each component of $\Gamma$ contains a fixed amount of mass near its junctions, any decomposition of $\Gamma$ into disjoint nontrivial components would require the corresponding mass in $\Gamma_j^{R_j}$ to split as $j$ becomes large. This would contradict the connectedness of $\Gamma_j^{R_j}$, because no mass can disappear between junctions due to the density lower bounds and barrier arguments that prevent neck–pinching or escape to infinity.  

Therefore, $\Gamma$ cannot be decomposed into disjoint nontrivial components, and its support is connected.


\smallskip\noindent\textbf{Area-minimizing:}

Let \(T\) be any compactly supported competitor in \(\mathbb{R}^3\). 
Choose $j$ sufficiently large so that \(\operatorname{supp}(T) \subset \mathbb B^3_{R_j}(0)\).
For such $j$, the minimality of $\Gamma^{R_j}$ in $\mathbb B^3_{R_j}(0)$ implies
\[
\operatorname{Size}\big(\Gamma^{R_j}\big)\le \operatorname{Size}\big(T\cap \mathbb B^3_{R_j}(0)\big)
   = \operatorname{Size}(T).
\]
Taking $j\to\infty$ and using lower semicontinuity of the size functional on each compact set yields \(\operatorname{Size}(\Gamma) \le \operatorname{Size}(T)\).
Therefore $\Gamma$ is area-minimizing in $\mathbb{R}^3$ with respect to all compactly supported competitors.

Combining this with the asymptotic analysis in Section~\ref{InitAsym}, we conclude that $\Gamma$ remains asymptotic at infinity to the initial configuration $\mathcal{C}_0$ in the sense that the spherical projections \(\Gamma\cap\mathbb S^2_r\) converge in Hausdorff distance to \(\mathcal{C}_0\cap\mathbb S^2_r\) as \(r\to\infty\).
\end{proof}

\appendix
\section{First Variational of weighted area Functional}\label{1stVar}

Let $\Gamma_t \subset \mathbb{R}^3$ denote a smooth family of embedded interfaces at time $t>0$. 
We define the weighted area functional associated with a conformally weighted metric 
\begin{align*}
g(\mathbf{x},t)
= e^{\frac{C|\mathbf{x}|^2}{4\lambda^2(t)t}}
\,d\mathbf{x}^2,
\end{align*}
by
\begin{align}\label{entropyF}
\mathcal{F}_{g}(\Gamma_t)
:=\int_{\Gamma_t} 
e^{\frac{C|\mathbf{x}|^2}{4\lambda^2(t)t}}\,
d\mathcal{H}^{2}(\mathbf{x}),
\qquad  \mathbf{x}=(x_1,x_2,x_3)\in \Gamma_t,
\end{align}
where $C\in\mathbb{R}$ is a constant, $\lambda(t)>0$ is a smooth scaling function, 
and $d\mathcal{H}^2(\mathbf{x})$ denotes the surface measure on $\Gamma_t$ 
induced by the ambient Euclidean metric $d\mathbf{x}^2$. For general background concerning weighted area functionals, we refer to 
\cite{huisken1990, ilmanen1994, ColdingMinicozzi2012}.

For completeness, we note the following standard variational characterization.
\begin{proposition}
\label{criticalPt}
Let $\{\Omega_i\}_{i=1}^N$ be a multiphase partition of $\mathbb{R}^3$, and 
let $\Gamma_{ij}$ denote the interface between two distinct phase domains $\Omega_i$ and $\Omega_j$. If an embedded interface $\Gamma_{ij}$ is a regular self-expanding solution to the mean curvature flow, then it is a critical point of the weighted area functional \eqref{entropyF} with respect to normal variations.
\end{proposition}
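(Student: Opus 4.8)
The plan is to compute the first variation of $\mathcal{F}_{g}$ directly and to check that its Euler--Lagrange equation is precisely the self-expander equation~\eqref{eq:SE}; this is the expander counterpart of the Colding--Minicozzi characterization~\cite{ColdingMinicozzi2012} of self-shrinkers as critical points of the Gaussian-weighted area, the only change being the sign of the exponential weight. Throughout, the time $t>0$ is held fixed during the variation, so the weight is a fixed function of $\mathbf{x}$, and the statement is understood under the normalization that identifies the weighted metric at time $t$ with the metric~\eqref{SelfEg}.

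First I would write $w(\mathbf{x}):=\dfrac{C|\mathbf{x}|^{2}}{4\lambda^{2}(t)\,t}$, so that at the fixed time $t$ one has $\mathcal{F}_{g}(\Gamma_{ij})=\int_{\Gamma_{ij}}e^{w}\,d\mathcal{H}^{2}$ and $\nabla w(\mathbf{x})=\dfrac{C}{2\lambda^{2}(t)\,t}\,\mathbf{x}$. Under the normalization $C=\lambda^{2}(t)\,t$, which makes $g(\cdot,t)$ coincide with~\eqref{SelfEg} and $w(\mathbf{x})$ equal $|\mathbf{x}|^{2}/4$, this reduces to $\nabla w(\mathbf{x})=\tfrac12\mathbf{x}$. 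Next, let $F:\Gamma_{ij}\times(-\varepsilon,\varepsilon)\to\mathbb{R}^{3}$ be a normal variation with $F(\cdot,0)=\mathrm{id}$ and $\partial_{s}F|_{s=0}=\phi\,\vec{n}_{ij}$, where $\phi\in C_{c}^{\infty}$ is supported in the relative interior of $\Gamma_{ij}$. Using the two standard first-variation identities $\partial_{s}\big(d\mathcal{H}^{2}\big)\big|_{s=0}=\operatorname{div}_{\Gamma_{ij}}(\phi\,\vec{n}_{ij})\,d\mathcal{H}^{2}=-H\phi\,d\mathcal{H}^{2}$ (first variation of area, with $H$ the mean curvature normalized as in~\eqref{eq:SE}) and $\partial_{s}\big(e^{w\circ F}\big)\big|_{s=0}=e^{w}\,\langle\nabla w,\vec{n}_{ij}\rangle\,\phi$, I obtain
\[
\frac{d}{ds}\Big|_{s=0}\mathcal{F}_{g}\big(F(\Gamma_{ij},s)\big)
=\int_{\Gamma_{ij}}e^{w}\,\phi\,\big(\langle\nabla w,\vec{n}_{ij}\rangle-H\big)\,d\mathcal{H}^{2}
=\int_{\Gamma_{ij}}e^{w}\,\phi\,\Big(\tfrac12\langle\mathbf{x},\vec{n}_{ij}\rangle-H\Big)\,d\mathcal{H}^{2}.
\]
Since $\Gamma_{ij}$ is a self-expanding solution, \eqref{eq:SE} gives $H=\tfrac12\langle\mathbf{x},\vec{n}_{ij}\rangle$ pointwise, so the integrand vanishes identically and the first variation is zero for every such $\phi$; hence $\Gamma_{ij}$ is a critical point of~\eqref{entropyF}. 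Conversely, vanishing of the displayed integral for all $\phi$ forces $H=\tfrac12\langle\mathbf{x},\vec{n}_{ij}\rangle$ by the fundamental lemma of the calculus of variations, so the two notions coincide.

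It remains to address variations that are not compactly supported in the relative interior, i.e.\ that displace the triple junctions $\tau_{ijk}$ carried by $\Gamma_{ij}$. Here integration by parts in the first-variation formula produces, for each such $1$-junction, a boundary term $\int_{\tau_{ijk}}e^{w}\,\langle\phi\,\vec{n}_{ij},\eta_{ij}\rangle$, where $\eta_{ij}$ is the outward conormal of $\Gamma_{ij}$ along $\tau_{ijk}$. Summing the contributions of the three sheets meeting along $\tau_{ijk}$ and invoking the $120^{\circ}$ balancing relation $\sum_{i=1}^{3}\vec{n}_{i}=0$ for a regular $1$-junction, these conormal terms cancel in pairs, exactly as in the reduction carried out in Section~\ref{InitAsym}. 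Thus the hypothesis that the interface belongs to a \emph{regular} multiphase network is precisely what guarantees that no extra junction term obstructs criticality.

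The computation is entirely routine; the only points that genuinely require care are (i) the bookkeeping of the normalization constant --- one must verify that the drift $\nabla w=\tfrac{C}{2\lambda^{2}(t)t}\,\mathbf{x}$ coming from the conformal exponent reproduces exactly the coefficient $\tfrac12$ appearing in~\eqref{eq:SE}, which is what fixes the identification of~\eqref{entropyF} at time $t$ with the metric $g$ of~\eqref{SelfEg} --- and (ii) the cancellation of the junction boundary terms through $\sum\vec{n}_{i}=0$, which is the one place where the word \emph{regular} in the hypothesis is actually used. Neither is a real obstacle; the proposition is essentially the first-variation restatement of the equivalence ``self-expander $\Longleftrightarrow$ $g$-minimal surface'' already recorded in Section~\ref{InitAsym}.
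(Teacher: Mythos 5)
Your proposal is correct and follows essentially the same route as the paper: both compute the first variation of the weighted functional, identify the Euler--Lagrange equation $H=\langle\nabla w,\vec n\rangle=\tfrac{C}{2\lambda^{2}(t)t}\langle\mathbf{x},\vec n\rangle$ with the self-expander equation~\eqref{eq:SE}, and dispose of the junction boundary terms via the conormal balancing condition (the paper phrases the interior step for a general vector field $X$ split into tangential and normal parts before specializing to $X=f\vec n$, but this is only a cosmetic difference). The one phrase worth tightening is that the three junction conormal terms cancel because $\sum_i\vec\eta_i=0$ for the $120^{\circ}$ configuration---a single three-term cancellation rather than a cancellation ``in pairs''---which matches the paper's appeal to the balance condition on co-normals.
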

\begin{proof}
Let $\alpha \subset \Gamma_{ij}$ be a smooth surface patch, and let 
$K \subset\subset U \subset \alpha$ be relatively compact in a local coordinate chart. 
Consider a smooth one-parameter family of diffeomorphisms 
$\{\Phi^s\}_{s\in[0,1]}$ on $U$, generated by a compactly supported vector field $X$ in $K$, satisfying
\[
\frac{d}{ds}\Phi^s(\mathbf{p}) = X(\Phi^s(\mathbf{p})), \qquad \Phi^0 = \mathrm{Id},\qquad \mathbf{p}\in U.
\]
The variation of the weighted area area functional \eqref{entropyF} on $\alpha\cap K$ is given by
\[
\mathcal{F}_g(\Phi^s(\alpha\cap K))
=\int_{\alpha\cap K}J\Phi^s_g\,d\mathcal{H}^2,
\qquad 
J\Phi^s_g=e^{\phi(\mathbf x,t)}J\Phi^s,
\]
where
\[
g=e^{\phi}d\mathbf x^2,\qquad 
\phi(\mathbf x,t)=\frac{C|\mathbf x|^2}{4\lambda^2(t)t}.
\]
Using standard first-variation computations (via the divergence theorem), we obtain
\begin{subequations}
\begin{align}\label{1varEq1}
\frac{d\mathcal F_g}{ds}\Big|_{s=0}
=\int_{\alpha\cap K}e^{\phi}
\left(\langle D\phi, X\rangle +\operatorname{div}_{\alpha}X\right) 
\,d\mathcal{H}^2.
\end{align}
Decomposing $X=X^\top+(X\cdot \vec{n})\vec{n}$ into its tangential and normal parts on $\alpha$ and using 
$\operatorname{div}_\alpha X=\operatorname{div}_\alpha X^\top - H(X\cdot \vec{n})$, 
the weighted divergence formula gives
\[
\int_{\alpha\cap K} e^{\phi}\operatorname{div}_\alpha X
= \int_{\partial(\alpha\cap K)} e^{\phi}\langle X,\nu\rangle\,d\mathcal H^1
- \int_{\alpha\cap K} e^{\phi}\langle\nabla_\alpha\phi,X\rangle\,d\mathcal H^2,
\]
where $\vec{\nu}$ denotes the outward co-normal along $\partial(\alpha\cap K)$. 
Hence
\begin{align}\label{1varEq2}
\frac{d\mathcal F_g}{ds}\Big|_{s=0}
=\int_{\partial(\alpha\cap K)}e^{\phi}\langle X, \vec{\nu}\rangle d\mathcal{H}^1
+\int_{\alpha\cap K}e^{\phi}\left(\langle\nabla^{\perp}\phi, X\rangle
-H(X\cdot \vec{n})\right)d\mathcal{H}^2.
\end{align}
\end{subequations}
If $X$ has compact support in $\alpha\cap K$, the boundary term in \eqref{1varEq2} vanishes. 
For normal variations $X=f\vec{n}$, we obtain
\[
\frac{d\mathcal F_g}{ds}\Big|_{s=0}
= \int_{\alpha\cap K} e^{\phi}\big(\langle\nabla^\perp\phi,\vec{n}\rangle - H\big) f \,d\mathcal H^2.
\]
The corresponding Euler–Lagrange equation for stationary points is
\begin{equation}\label{eq:EuL}
H = \langle\nabla^\perp\phi,\vec{n}\rangle,
\quad\text{i.e.}\quad 
\vec{H}=\frac{C}{2\lambda^2(t)t}\,\mathbf x^\perp.
\end{equation}
Thus, $\alpha$ is a self-expanding surface with the same normalization constant, showing that every self-expanding interface is a critical point of $\mathcal F_g$.

Finally, when the variation has support near a junction (triple or quadruple), one proceeds similarly but must handle the boundary integrals along the junction curves. 
If the variation is chosen compactly supported in a neighborhood of a triple junction and the junctions satisfy the balance condition on co-normal vectors $\vec{\nu}$ (or, equivalently, the sum of unit co-normals weighted by surface tensions vanishes), then the boundary terms cancel. 
An analogous statement holds at a quadruple point under balancing condition cited in~\cite{LawlorMorgan1994}. 
Hence under these balancing conditions the boundary terms vanish and the same Euler--Lagrange condition \eqref{eq:EuL} applies locally on each sheet, which completes the proof.
\end{proof}






\end{document}